\numberwithin{equation}{section}
\newtheorem{theorem}{Theorem}[section]
\newtheorem{lemma}{Lemma}[section]
\newtheorem{question}{Question}[section]
\newcommand{\ov}[1]{\overline{#1}}
\theoremstyle{definition}
\theoremstyle{remark}
\begin{document}
\bibliographystyle{amsplain}

\title[Instantaneous convexity breaking]{Instantaneous convexity breaking for the quasi-static droplet model}

\author[A. Chau]{Albert Chau}
\address{Department of Mathematics, The University of British Columbia, 1984 Mathematics Road, Vancouver, B.C.,  Canada V6T 1Z2.  Email: chau@math.ubc.ca. } 
\author[B. Weinkove]{Ben Weinkove}
\address{Department of Mathematics, Northwestern University, 2033 Sheridan Road, Evanston, IL 60208, USA.  Email: weinkove@math.northwestern.edu.}

\thanks{Research supported in part by  NSERC grant $\#$327637-06 and NSF grant DMS-2005311.}

\maketitle

\vspace{-20pt}

\begin{abstract}  We consider a well-known quasi-static model for  the shape of a liquid droplet.  The solution can be described in terms of  time-evolving domains in $\mathbb{R}^n$.
We give an example to show that convexity of the domain can be instantaneously broken.
\end{abstract}

\maketitle
\section{Introduction}

We consider the following system of equations for a function $u(x,t)$ and domains $\Omega_t \subset \mathbb{R}^n$, for $t \ge 0$.  This system is used to model the quasi-static shape evolution of a liquid droplet of height $u(x,t)$ occupying the region $\Omega_t$:
%   quasi-static droplet model was investigated in \cite{EG, FK, GK}: and is given by a family of non-negative functions $u(x, t)$ on $\mathbb{R}^n\times[0, T)$ and domains $\Omega_t \subset \mathbb{R}^n$ satisfying 
\begin{equation} \label{dm}
\begin{split}
 - \Delta u = {} & \lambda_t, \quad \textrm{on } \Omega_t \\
 u = {} & 0, \quad \textrm{on } \partial \Omega_t \\
 V = {} & F(|Du|), \quad \textrm{on } \partial \Omega_t \\
 \int_{\Omega_t} u \, dx = {} & 1.
\end{split}
\end{equation}
In the above, $V$ is the velocity of the free boundary $\partial \Omega_t$ in the direction of the outward unit normal and $F: (0,\infty) \rightarrow \mathbb{R}$ is an analytic function with $F'(r)>0$ for $r>0$.  The constant $\lambda_t>0$ is determined by the integral condition on $u$.

The initial data is given by a domain $\Omega_0$ which we assume is bounded with smooth boundary $\partial \Omega_0$.   Note that the domains $\Omega_t$ (assuming they are bounded with sufficiently regular boundary $\partial \Omega_t$) determine uniquely the solution $x \mapsto u(x,t)$.  Thus we may denote a solution of (\ref{dm}) by a family of evolving domains $\Omega_t$.
 In Section \ref{sectionshort} we will explain what is meant by a \emph{classical solution} to this problem.

The system of equations (\ref{dm}) has long been accepted as a model for droplet evolution in the physical literature \cite{C, Gl, Gr, T, V}.  There have been results on weak formulations  of this equation by Glasner-Kim \cite{GlK} and Grunewald-Kim \cite{GK}.  Feldman-Kim  \cite{FK} gave some conditions for global existence and convergence to an equilibrium.  Escher-Guidotti \cite{EG} proved a short time existence result for classical solutions, which we describe in Section \ref{sectionshort} below.

In this note we address the following natural question:

\begin{question} \label{question}
Is the convexity of $\Omega_t$  preserved by the system (\ref{dm})?
\end{question}

This question is implicit in the work of Glasner-Kim \cite{GlK}.  It  was raised explicitly by Feldman-Kim \cite[p.822]{FK}, ``Let us point out that, in particular, it is unknown whether the convexity of the drop is preserved in the system [(\ref{dm})].''

 In this note, we answer Question \ref{question} by showing that convexity is \emph{not} generally preserved.  We make an assumption on $F$, namely that
 \begin{equation} \label{assumption}
 \lim_{r \rightarrow 0^+} \frac{F''(r)}{F'(r)} \ge \gamma, \quad \textrm{for some } \gamma>0.
 \end{equation} 

This includes the important cases $F(r) = r^3-1$ and $F(r)=r^2-1$ considered in \cite{GlK} and \cite{FK, GK} respectively.  

We construct an example where $\Omega_t$ is convex for $t=0$, but not convex for $t\in (0, \delta)$ for some $\delta>0$.
 
 \begin{theorem}\label{mainthm}  Assume $F$ satisfies assumption (\ref{assumption}).  
 	There exists $\delta>0$ and a bounded convex domain $\Omega_0 \subset \mathbb{R}^2$ with smooth boundary such that the solution $\Omega_t$ to \eqref{dm} with this initial data is not convex for any $t\in (0, \delta]$.
 \end{theorem}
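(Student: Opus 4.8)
The plan is to linearise the evolution of the curvature of $\partial\Omega_t$ at $t=0$, reduce the theorem to a pointwise inequality on $\partial\Omega_0$, and then construct a convex domain for which that inequality holds.

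\emph{Reduction.} By the short time existence theory for classical solutions (Section~\ref{sectionshort}), for small $t$ the boundary $\partial\Omega_t$ varies smoothly in $t$, so its curvature $\kappa(\cdot,t)$ --- with respect to the outward unit normal, so that $\kappa\ge 0$ means convexity --- evolves by the standard first variation law for a curve moving with normal speed $V=F(|Du|)$, namely $\partial_t\kappa=-\partial_s^2V-\kappa^2V$, with $\partial_s$ arclength differentiation. Hence at a point $p\in\partial\Omega_0$ with $\kappa(p,0)=0$,
\[
\partial_t\kappa(p,0)=-F'\!\big(|Du_0|(p)\big)\,\partial_s^2|Du_0|(p)-F''\!\big(|Du_0|(p)\big)\,\big(\partial_s|Du_0|(p)\big)^2 ,
\]
where $u_0=u(\cdot,0)$; the tangential reparametrisation ambiguity in the evolution law contributes a multiple of $\partial_s\kappa(p,0)$, which vanishes since convexity forces such a $p$ to be a minimum of $\kappa(\cdot,0)$. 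For the point $p_t\in\partial\Omega_t$ flowing from $p$ we then get $\kappa(p_t,t)=t\,\partial_t\kappa(p,0)+o(t)$, so it suffices to construct a bounded convex $\Omega_0\subset\mathbb{R}^2$ with smooth boundary and a boundary point $p$ with $\kappa(p,0)=0$ and
\begin{equation}\label{eq:keyineq}
F'\!\big(|Du_0|(p)\big)\,\partial_s^2|Du_0|(p)+F''\!\big(|Du_0|(p)\big)\,\big(\partial_s|Du_0|(p)\big)^2>0 ;
\end{equation}
this gives $\kappa(p_t,t)<0$, hence $\Omega_t$ non-convex, for all $t\in(0,\delta]$ with $\delta>0$ small.

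\emph{The domain.} If $\partial_s^2|Du_0|(p)\ge 0$ then \eqref{eq:keyineq} is immediate from $F'>0$; assumption (\ref{assumption}) is there to treat the case $\partial_s^2|Du_0|(p)<0$, by arranging $|Du_0|(p)$ to be small, so that $F''(|Du_0|(p))/F'(|Du_0|(p))\ge\gamma>0$ (note (\ref{assumption}) also forces $F''>0$ near $0$). I would take $\Omega_0$ to be a long, thin convex domain with a long flat boundary segment $L\subset\{y=0\}$: the end-capped, smoothed region lying above $\{y=0\}$ and under a graph $y=h(x)$, $x\in[-\ell,\ell]$, where $h$ is a fixed strictly concave profile rescaled to height $h_{\max}$ and width $\ell$, chosen so that near $x=0$ the profile $h$ has nonzero slope but only slight curvature. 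With $p$ the point of $L$ below $x=0$, then $\kappa(p,0)=0$. On a thin domain the torsion function is close to the one-dimensional Saint-Venant profile $u_0(x,y)\approx\tfrac{\lambda}{2}\,y\,(h(x)-y)$, with $\lambda$ fixed by $\int_{\Omega_0}u_0=1$; so on $L$ we have $|Du_0|\approx\tfrac{\lambda}{2}h$, giving $|Du_0|(p)\sim(h_{\max}^2\ell)^{-1}$ --- as small as we like --- together with $\partial_s|Du_0|(p)\approx\tfrac{\lambda}{2}h'(0)\ne 0$ and $\partial_s^2|Du_0|(p)\approx\tfrac{\lambda}{2}h''(0)$ negative but, because the profile is nearly affine at $0$, small compared with $(\partial_s|Du_0|(p))^2/|Du_0|(p)$. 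Thus \eqref{eq:keyineq} reduces to having $|Du_0|(p)\,\tfrac{F''}{F'}(|Du_0|(p))$ exceed a small constant, which holds by (\ref{assumption}) once $|Du_0|(p)$ is taken small (but not too small).

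\emph{Main obstacle.} The substantive work is to make this thin-domain analysis rigorous: one needs to control $u_0$ and, crucially, its first two tangential derivatives along $L$, showing that the error in the Saint-Venant approximation --- and the effect of the end caps and of smoothing $\partial\Omega_0$ --- is of genuinely lower order near $p$ as $\ell/h_{\max}\to\infty$, so that the sign in \eqref{eq:keyineq} survives. This calls for careful elliptic estimates on a thin, slowly varying strip and for tracking how the volume normalisation ties $\lambda$ to the geometry. The remaining ingredients --- the curvature evolution law, the reduction to \eqref{eq:keyineq}, and the existence of a suitable convex profile --- are routine.
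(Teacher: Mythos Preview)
Your overall strategy coincides with the paper's: build a smooth bounded convex $\Omega_0$ carrying a flat boundary segment along which the normal speed $V=F(|Du_0|)$ is strictly convex in arclength, so that the flat piece immediately buckles and $\Omega_t$ loses convexity. The differences are in the construction of $\Omega_0$ and in the reduction step.

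\textbf{Construction.} Instead of a thin strip and the Saint--Venant approximation, the paper takes the equilateral triangle $D$ of side $2a$, where the torsion function is the explicit cubic $v=cy\big((y-a\sqrt3)^2-3x^2\big)$; on the bottom edge $|Dv|=3c(a^2-x^2)$, and one computes directly
\[
V''(x)=36c^2x^2\,F''\big(3c(a^2-x^2)\big)-6c\,F'\big(3c(a^2-x^2)\big),
\]
which, using (\ref{assumption}), is positive for $x$ near the corners once $a$ is chosen small (here $c=5/(3a^5)$, so shrinking $a$ is exactly the mechanism that makes $|Dv|$ small while keeping $\partial_s|Dv|$ large). One then rounds the corners of $D$ by a nested sequence of smooth convex domains $D_k\nearrow D$; by the maximum principle and interior/boundary Schauder estimates the normalized torsion solutions converge in $C^\ell$ away from the vertices, so the strict midpoint inequality for $V$ survives on some $D_k$. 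This completely bypasses what you call the ``main obstacle'': there is no asymptotic approximation to justify, only an exact formula and a compactness argument. Your thin--strip plan is plausible, but note the tension in your scalings: you need $|Du_0|(p)\sim(h_{\max}^2\ell)^{-1}$ small to invoke (\ref{assumption}), while with a fixed profile the inequality \eqref{eq:keyineq} forces $h_{\max}^2\ell$ to be bounded above; you resolve this by making $h''(0)$ tiny, but then the leading Saint--Venant term in $\partial_s^2|Du_0|$ is tiny too and the correction could dominate --- so the error control is genuinely delicate, not just bookkeeping.

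\textbf{Reduction.} The paper also avoids the curvature evolution law. Writing the moving flat piece as a graph $y=g(x,t)$ with $g(\cdot,0)\equiv 0$ and $\partial_t g(\cdot,0)=-V$, the strict midpoint inequality for $V$ gives, for small $t>0$,
\[
\tfrac12\big(g(x_0,t)+g(x_1,t)\big)<g\!\left(\tfrac{x_0+x_1}{2},t\right),
\]
so $x\mapsto g(x,t)$ is not convex and $\Omega_t$ is not convex. This needs only $g\in C^1\big([0,T],C^0([x_0,x_1])\big)$, which is exactly what Theorem~\ref{thmEG} provides. Your argument via $\partial_t\kappa$ tacitly requires $g_{xx}$ to be $C^1$ in $t$ at $t=0$, which is one derivative beyond the stated regularity $\rho\in C([0,T],C^{2+\alpha})\cap C^1([0,T],C^{1+\alpha})$; it can presumably be recovered by bootstrapping, but the secant formulation sidesteps the issue entirely.
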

 
 Escher-Guidotti \cite{EG} showed that as long as $\Omega_0$ is a bounded domain with sufficiently smooth boundary, there always exists a unique classical solution for a short time, and this is what is meant by ``the solution $\Omega_t$'' in the statement of Theorem \ref{mainthm}.  In Section \ref{sectionshort}, we describe more precisely the results of \cite{EG}.

In Section \ref{proof} we give the proof of Theorem \ref{mainthm}.  The starting point is an  explicit solution of the equation $-\Delta u =\lambda_0$ on an equilateral triangle \cite{KM}.  We smooth out the corners to obtain our convex domain $\Omega_0$, and  show that it immediately breaks convexity.

\section{Short time existence} \label{sectionshort}

In this section, we recall the short time existence result of Escher-Guidotti \cite{EG}.

We first give a definition of a solution of (\ref{dm}), following \cite{EG}.  Note that the domains $\Omega_t$ determine uniquely the functions $u$, so we will describe the solution of (\ref{dm}) in terms of varying domains - given as graphs over the original boundary.

Fix $\alpha \in (0,1)$.  Assume $\Omega_0$ is a bounded domain in $\mathbb{R}^n$ whose boundary $\Gamma_0:=\partial \Omega_0$ is a smooth hypersurface. Let $\nu(x)$ denote the unit outward normal to $\Gamma_0$ at $x$.  Then there exists a maximal constant $\sigma(\Omega_0)>0$ such that for any given function $\rho \in C^{2+\alpha}(\Gamma_0)$ with $\| \rho \|_{C^1(\Gamma_0)}\le \sigma$, the set
$$\Gamma_{\rho} = \{ x + \rho(x) \nu(x) \ | \ x \in \Gamma_0 \},$$
is a $C^{2+{\alpha}}$ hypersurface in $\mathbb{R}^n$ which is the boundary of a bounded domain $\Omega=\Omega(\rho)$.

We can now describe a solution of (\ref{dm}) in terms of a time-varying family $\rho(x,t)$.  Namely, given
$$\rho \in C([0,T], C^{2+\alpha}(\Gamma_0)) \cap C^1([0,T], C^{1+\alpha}(\Gamma_0)),$$
with $\sup_{t \in [0,T]}\| \rho(\cdot, t) \|_{C^1(\Gamma_0)}< \sigma(\Omega_0)$, 
write $\Omega_t$, for $t \in [0,T]$ for the corresponding family of domains, with boundaries $\Gamma_t:=\Gamma_{\rho(t)}$.  The velocity $V$ of the boundary in the direction of the outward normal, at a point $y=x+\rho(x,t) \nu(x) \in \Gamma_t$ is given by
$$V = \frac{\partial \rho}{\partial t} (x,t) \nu(x) \cdot n(y,t),$$
where $n(y,t)$ is the outward unit normal to $\Gamma_t$ at the point $y$.

Since the domains $\Omega_t$ have $C^{2+\alpha}$ boundaries, there exists for each $t$ a unique solution $u(\cdot, t) \in C^{2+\alpha}(\ov{\Omega}_t)$ and $\lambda_t \in \mathbb{R}$ of 
$$-\Delta u = \lambda_t, \quad \textrm{on } \Omega_t, \quad u|_{\Gamma_t} =0, \quad \int_{\Omega_t}u\, dx =1,$$
(see for example \cite[Theorem 6.14]{GT}).

Then we say that such a $\rho$ is a \emph{classical solution} of (\ref{dm}) with initial domain $\Omega_0$ if the velocity $V(y)$ at each $y \in \Gamma_t$, for $t \in [0,T]$ satisfies
$$V = F(|Du|).$$
 
 The main theorem of Escher-Guidotti \cite{EG} implies in particular the following:
 
 \begin{theorem} \label{thmEG}
 There exists a $T>0$ and a unique classical solution 
$$\rho \in C([0,T], C^{2+\alpha}(\Gamma_0)) \cap C^1([0,T], C^{1+\alpha}(\Gamma_0))$$
 of the quasi-static droplet model (\ref{dm}) with initial domain $\Omega_0$ whose  boundary $\Gamma_0$ is smooth.
 \end{theorem}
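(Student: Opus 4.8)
This statement is the short-time existence and uniqueness theorem of Escher--Guidotti \cite{EG}, and the natural proof (the one in \cite{EG}) is to convert the free boundary problem \eqref{dm} into an abstract parabolic evolution equation for the single scalar height function $\rho$ on the fixed hypersurface $\Gamma_0$, and then apply the standard well-posedness theory for such equations in little Hölder spaces. Concretely, for $\rho$ as in Section~\ref{sectionshort} let $(u_\rho,\lambda_\rho)$ be the unique solution of $-\Delta u_\rho=\lambda_\rho$ on $\Omega(\rho)$, $u_\rho|_{\Gamma_\rho}=0$, $\int_{\Omega(\rho)}u_\rho=1$ provided by \cite[Theorem 6.14]{GT} (equivalently $u_\rho=\lambda_\rho w_\rho$ with $-\Delta w_\rho=1$, $w_\rho|_{\Gamma_\rho}=0$ and $\lambda_\rho=(\int_{\Omega(\rho)}w_\rho)^{-1}$). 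With $\Theta_\rho(x)=x+\rho(x)\nu(x)$ and $n_\rho$ the outward unit normal of $\Gamma_\rho$, the free boundary condition $V=F(|Du|)$ together with the formula $V=\partial_t\rho\,(\nu\cdot n_\rho)$ from Section~\ref{sectionshort} rewrites \eqref{dm} as the single equation $\partial_t\rho=\Phi(\rho)$, $\rho(0)=0$, on $\Gamma_0$, where $\Phi(\rho)=(\nu\cdot(n_\rho\circ\Theta_\rho))^{-1}\,F(|Du_\rho|\circ\Theta_\rho)$. Since $\Gamma_0$ is smooth, the initial value $\rho\equiv0$ lies in the little Hölder space $h^{2+\alpha}(\Gamma_0)$.

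The first task is to show $\Phi$ is a smooth (indeed analytic) map from a neighbourhood of $0$ in $h^{2+\alpha}(\Gamma_0)$ into $h^{1+\alpha}(\Gamma_0)$. Pulling the elliptic problem on $\Omega(\rho)$ back to the fixed domain $\Omega_0$ by a Hanzawa-type diffeomorphism built from $\rho$ turns it into a uniformly elliptic second-order Dirichlet problem on $\Omega_0$ whose coefficients depend analytically on $\rho$ (and not on its derivatives); the analytic implicit function theorem then gives analytic dependence of $w_\rho\in C^{2+\alpha}(\overline{\Omega_0})$ on $\rho$, and the integral normalization, a single scalar constraint, gives analytic dependence of $\lambda_\rho$. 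Taking the tangential trace of $|Du_\rho|$ on $\Gamma_\rho$ and transferring it to $\Gamma_0$ loses exactly one derivative, and the geometric factor $(\nu\cdot(n_\rho\circ\Theta_\rho))^{-1}$ depends analytically on $\rho$ and $\nabla_{\Gamma_0}\rho$; hence $\Phi$ is analytic $h^{2+\alpha}(\Gamma_0)\to h^{1+\alpha}(\Gamma_0)$.

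The second task is the parabolicity of the linearization, which I compute by shape differentiation. The shape derivative $u'$ of $u_\rho$ in direction $\phi$ solves $-\Delta u'=\lambda'$ on $\Omega_0$ with $u'|_{\Gamma_0}=|Du_0|\,\phi$ and $\int_{\Omega_0}u'=0$ (this last condition determining $\lambda'=\lambda'(\phi)$, a bounded, indeed smoothing, linear functional of $\phi$), and since $Du_0=-|Du_0|\,\nu$ on $\Gamma_0$ one obtains $D\Phi(0)\phi=-F'(|Du_0|)\,\Lambda_0(|Du_0|\,\phi)+R\phi$, where $\Lambda_0$ is the Dirichlet-to-Neumann operator of $\Omega_0$ — a nonnegative self-adjoint first-order elliptic operator with principal symbol $|\xi|$ — and $R$ collects terms of order at most one (a transport term arising from the factor $\nu\cdot n_\rho$, together with zeroth-order terms) with no definite sign. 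Because $F'(|Du_0|)|Du_0|>0$, the operator $D\Phi(0)$ is a first-order operator whose symbol lies in a sector about the negative real axis with real part $\sim -F'(|Du_0|)|Du_0|\,|\xi|$; it is therefore sectorial and generates an analytic semigroup on $h^{1+\alpha}(\Gamma_0)$ with domain $h^{2+\alpha}(\Gamma_0)$.

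With these two facts in hand, $\partial_t\rho=\Phi(\rho)$ is an abstract quasilinear parabolic evolution equation of exactly the type to which the general existence, uniqueness and regularity theory in continuous interpolation spaces applies (this is the framework of \cite{EG}): $\Phi$ is smooth near $0$, $D\Phi(0)$ generates an analytic semigroup on the pair $(h^{1+\alpha}(\Gamma_0),h^{2+\alpha}(\Gamma_0))$, and $\rho(0)=0$ belongs to the smaller space. One concludes that there is $T>0$ and a unique maximal solution $\rho\in C([0,T],h^{2+\alpha}(\Gamma_0))\cap C^1([0,T],h^{1+\alpha}(\Gamma_0))$, which in particular lies in $C([0,T],C^{2+\alpha}(\Gamma_0))\cap C^1([0,T],C^{1+\alpha}(\Gamma_0))$ and is a classical solution in the sense of Section~\ref{sectionshort}; uniqueness within this class is part of the abstract statement. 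The main obstacle is the structural analysis underlying the two tasks above: showing that the nonlocal operator $\rho\mapsto|Du_\rho|\circ\Theta_\rho$ — which requires solving a Dirichlet problem on the moving domain, fixing $\lambda_\rho$ through the integral constraint, and transferring the Neumann data to $\Gamma_0$ — is smooth with precisely a one-derivative loss, and then isolating in its linearization the Dirichlet-to-Neumann leading part with the sign that makes the equation forward-parabolic. Once this is established, existence and uniqueness are a direct appeal to abstract parabolic theory.
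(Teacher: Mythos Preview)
The paper does not prove this theorem at all: Theorem~\ref{thmEG} is simply quoted as the main result of Escher--Guidotti \cite{EG}, preceded by the sentence ``The main theorem of Escher--Guidotti \cite{EG} implies in particular the following,'' and followed only by the remark that \cite{EG} in fact allows $C^{2+\alpha}$ initial boundary and does not need assumption \eqref{assumption}. There is no argument given in the present paper.

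Your proposal therefore goes well beyond what the paper does. What you have written is a faithful outline of the strategy actually used in \cite{EG}: recast \eqref{dm} as an evolution $\partial_t\rho=\Phi(\rho)$ on the fixed hypersurface $\Gamma_0$ via a Hanzawa-type diffeomorphism, show that $\Phi$ is analytic between little H\"older spaces with a one-derivative loss (using analytic dependence of the pulled-back Dirichlet solution and of $\lambda_\rho$), identify the principal part of $D\Phi(0)$ with a negative multiple of the Dirichlet-to-Neumann operator (the sign coming from $F'>0$ and Hopf's lemma giving $|Du_0|>0$ on $\Gamma_0$), conclude sectoriality, and invoke the abstract quasilinear parabolic theory in continuous interpolation spaces. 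This is the correct architecture, and for the purposes of the present paper nothing more than the citation is required.
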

 
 In fact they prove more:  they also allow their initial domain to have boundary in $C^{2+\alpha}$.  Note that this result does not require the assumption (\ref{assumption}).

  \section{Proof of Theorem \ref{mainthm}} \label{proof}
  
 In this section we give a proof of Theorem \ref{mainthm}.  We work in $\mathbb{R}^2$, using $x$ and $y$ as coordinates. The heart of the proof is the following lemma, which makes use of the assumption (\ref{assumption}).
   
 \begin{lemma}\label{initialdata}
 	There exists a bounded convex domain $\Omega_0$ with smooth boundary $\Gamma_0$, and real numbers $0<x_0<x_1$ with the following properties:  
 	\begin{enumerate}
 		\item[(i)] $\Omega_0$ is contained in $\{ y\ge 0\}$.
 		\item[(ii)] $(x, 0)\in \partial \Omega_0$ for $x_0 \le x \le x_1$.
 		\item[(iii)] Let $u(x,y)$ solve
	$$-\Delta u = \lambda_0, \quad \textrm{on } \Omega_0, \quad u|_{\Gamma_0} =0, \quad \int_{\Omega_0}u\, dxdy =1,$$
	for a constant $\lambda_0$.
Then $V(x):=F(|Du(x, 0)|)$ satisfies $$\frac{V(x_0)+V(x_1)}{2}> V\left( \frac{x_0+x_1}{2} \right).$$
	\end{enumerate}
 	\end{lemma}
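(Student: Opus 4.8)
The plan is to build $\Omega_0$ by starting from the equilateral triangle, on which the torsion function $u$ solving $-\Delta u = \lambda_0$ is known explicitly, and then rounding the corners. Place the triangle so that one side lies along the $x$-axis, say the side from $(0,0)$ to $(1,0)$, with the triangle sitting in $\{y \ge 0\}$. The explicit solution on an equilateral triangle (see \cite{KM}) is, up to normalization, a product of three linear functions, one vanishing on each side; thus on the bottom side $u(x,0)=0$ and $|Du(x,0)|$ is, up to a positive constant, the product of the distances from $(x,0)$ to the other two lines, which is a quadratic in $x$ of the form $c\,x(1-x)$ for some $c>0$. Hence $|Du(x,0)|$ is a concave function of $x$ on $(0,1)$, strictly positive on the open interval and vanishing at the endpoints.

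Next I would truncate near the two corners $(0,0)$ and $(1,0)$ and smooth: choose small $\epsilon>0$, keep the portion of the bottom side with $x \in [x_0,x_1]$ where $x_0=\epsilon$, $x_1=1-\epsilon$, and replace the pointed corners by smooth convex arcs so that the resulting domain $\Omega_0$ is bounded, convex, has smooth boundary $\Gamma_0$, is contained in $\{y\ge 0\}$, and coincides with the original triangle's bottom edge on $[x_0,x_1]\times\{0\}$. This gives properties (i) and (ii) directly; the point is that the modification is confined to two small neighborhoods of the corners and away from the segment $[x_0,x_1]\times\{0\}$. By elliptic estimates (or domain-monotonicity/perturbation arguments for the torsion problem), the solution $u$ on $\Omega_0$ — renormalized by the integral condition — is $C^1$-close on the bottom segment to (a positive multiple of) the triangle's torsion function as $\epsilon\to 0$, so $|Du(x,0)|$ on $[x_0,x_1]$ is a small $C^0$ (indeed $C^1$) perturbation of $c\,x(1-x)$, hence still strictly concave with a strict interior maximum-type behavior; in particular $V(x)=F(|Du(x,0)|)$ is defined and smooth there since $|Du|>0$.

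It remains to verify the strict Jensen-type inequality (iii): $\tfrac12\big(V(x_0)+V(x_1)\big) > V\big(\tfrac{x_0+x_1}{2}\big)$, i.e. that $V$ is ``strictly convex across'' the midpoint of $[x_0,x_1]$. Write $g(x)=|Du(x,0)|$, so $V=F\circ g$. Near the corners, as $x\to x_0^+$ we have $g(x)\to g(x_0)$ which is small (of order $\epsilon$, shrinking with the truncation), while at the midpoint $g$ is of order $1$. Compute $V'' = F''(g)(g')^2 + F'(g)g''$. Since $g$ is (nearly) the concave quadratic $c\,x(1-x)$, we have $g''<0$, so $F'(g)g''<0$; this term alone would make $V$ concave. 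The assumption \eqref{assumption} saves us precisely near the endpoints: there $g$ is small, and $F''(g)/F'(g)\ge \gamma>0$, so $F''(g)(g')^2 \ge \gamma F'(g)(g')^2$, and $(g')^2$ is of order $1$ near the endpoints (the quadratic has nonzero slope there) while $g''$ is the bounded constant $-2c$; choosing $\epsilon$ small makes $g(x_0),g(x_1)$ small, hence $F'(g)$ there is comparably controlled, and the dominant balance makes $V'' >0$ on intervals around $x_0$ and around $x_1$. Then I would not argue pointwise convexity on all of $[x_0,x_1]$ (it fails near the middle), but instead compare values directly: since $V$ is strictly convex on a definite neighborhood of each endpoint and the endpoint values $V(x_0),V(x_1)$ are pushed up relative to the ``concave core'' — more concretely, estimate $V(x_0)+V(x_1)$ from below using the convex behavior near the endpoints against the smooth bounded behavior of $V$ at the midpoint — to obtain the strict inequality.

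The main obstacle I anticipate is this last comparison: turning the local sign information on $V''$ into the global two-point inequality in a way that is uniform as $\epsilon\to 0$. The clean route is probably to track the behavior of $g$ and hence $V$ quantitatively in a shrinking corner region: parametrize so that near $x_0$ one has $g(x)\approx c'(x-x_0)$ for small $x-x_0$, whence $V(x)\approx F(c'(x-x_0))$ and assumption \eqref{assumption} forces $F$ to grow at least like $e^{\gamma r}$-type convexity near $0$ after an exponential-of-integral argument, so $F(c'(x-x_0))$ is strictly convex with a quantitatively large second derivative on the corner scale. Balancing that against the $O(1)$ Lipschitz bound for $V$ near the midpoint then yields (iii). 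Care is needed to keep all constants ($c$, $c'$, the $C^1$-perturbation bounds, $\gamma$) explicit enough that $\epsilon$ can be chosen last.
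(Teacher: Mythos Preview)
Your overall plan --- start from the explicit torsion function on the equilateral triangle, compute $V$ along the bottom edge, use assumption (\ref{assumption}) to get $V''>0$ near a corner, then smooth the corners and pass to the limit via elliptic estimates --- is exactly the paper's approach, and your smoothing/limit argument is essentially correct.

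The genuine gap is in your choice of $x_0$ and $x_1$. With the symmetric choice $x_0=\epsilon$, $x_1=1-\epsilon$, the midpoint is $\tfrac12$, where $g(x)=c\,x(1-x)$ attains its \emph{maximum}. Since $F'>0$, this forces
\[
V(x_0)=V(x_1)=F\big(c\,\epsilon(1-\epsilon)\big) \ < \ F(c/4)=V\!\left(\frac{x_0+x_1}{2}\right),
\]
which is the \emph{opposite} of (iii), for every $\epsilon\in(0,\tfrac12)$. Local convexity of $V$ near the endpoints does not ``push up'' the endpoint values: convexity governs how the graph sits relative to its chords, not the absolute size of $V(x_0)$, and that size is small precisely because $g$ is small near the corner and $F$ is increasing. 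No estimate of $V''$ can reverse this sign, so the final ``comparison'' step cannot be completed.

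The paper's remedy is twofold. First, it takes $x_0$ and $x_1$ \emph{both} close to the same corner, so that the entire interval $[x_0,x_1]$ lies in the region where $V''>0$; then strict convexity of $V$ on $[x_0,x_1]$ gives the midpoint inequality directly, with no global comparison needed. Second, it \emph{scales the triangle}: with side $2a$ one has $c=5/(3a^5)$, and $V''(x)=36c^2x^2\,F''(r)-6c\,F'(r)$ with $r=3c(a^2-x^2)$; shrinking $a$ makes $36c^2x^2$ dominate $6c$ for $|x|\ge a/2$, and then (\ref{assumption}) forces $V''>0$ for $|x|$ near $a$. With a triangle of fixed side length, $c$ may be too small for $V''$ ever to become positive, so this scaling step is not optional.
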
 
  
  \begin{proof}
  	We begin with the following explicit solution of the ``torsion problem,'' $-\Delta v = \textrm{const}$, on the equilateral triangle  \cite{KM}.  Let $D$ be the equilateral triangle of side length $2a$ given by
  	$$y>0, \ \sqrt{3} |x| > y-a\sqrt{3}.$$
  	The function 
  	$$v = c y ((y-a\sqrt{3})^2 - 3x^2), \quad \textrm{for } c:= \frac{5}{3a^5},$$
  	satisfies $$-\Delta v =4ac\sqrt{3},$$ vanishes on the boundary of $D$ and satisfies
	$$\int_{D} v \, dxdy=1.$$

On the bottom edge of the triangle 
  	$$E_1 = \{ (x, 0) \in \mathbb{R}^2 \ | \ -a \le x \le a \},$$
  	we have
  	$$v_y(x, 0) =  3c (a^2-x^2).$$
  	Hence
  	$$V(x): = F(3c(a^2-x^2)),$$
  and 
  \begin{equation}\label{uno}
  V''(x) = 36 c^2 x^2 F''(3c(a^2-x^2)) - 6cF'(3c(a^2-x^2)).
  \end{equation}
  Recalling that $c=5/(3a^5)$, then we may choose $a>0$ sufficiently small so that
  \begin{equation} \label{dos}
  36c^2 x^2 \ge 2\frac{6c}{\gamma}, \quad \textrm{for }|x| \ge a/2,
  \end{equation}
 where $\gamma>0$ is given by our assumption (\ref{assumption}).    From now on we fix this $a$ (and hence $c$).
  
   It follows from (\ref{uno}), (\ref{dos}) and (\ref{assumption}) that $V''(x)>0$ for $|x|$ sufficiently close to $a$.  In particular there exists $0<x_0<x_1<a$ with
\begin{equation}\label{Veqn}
\frac{V(x_0)+V(x_1)}{2}> V\left( \frac{x_0+x_1}{2} \right).
\end{equation}

  	The above example above readily implies the existence of a smooth domain $\Omega_0$  satisfying the conditions in the Lemma. 	Indeed, we only have to ``smooth the corners'' of the triangle domain $D$. 
	
Denote the vertices of $D$ by $p_1, p_2, p_3$.  Let $ \{D_k\}_{k=1}^{\infty}$ be a sequence of bounded convex domains with smooth boundaries such that  for each $k\geq 1$:
  	\begin{enumerate}
  		\item $D_k\subset D_{k+1}\subset D$ (the sequence is nested and increasing).
  		\item $D \setminus D_k  \subset \bigcup_{i=1}^3 B_{k^{-1}}(p_i)$, where $B_r(p)$ denotes the ball of radius $r$ centered at $p$. 
  	\end{enumerate}
Such a sequence $\{ D_k \}$ can be constructed by ``rounding out the corners'' of the triangle $D$ in a ball of radius $k^{-1}$ centered at each corner.

For each $k\geq 1$ let $u_k$ on $D_k$ be the solutions of 
$$-\Delta u_k = 4ac\sqrt{3}, \quad \textrm{on } D_k, \quad u|_{\partial D_k} =0,$$
where we recall that $a$ and $c$ are fixed constants.

It follows from property (1) above and the maximum principle that for each $k\geq 1$
   \begin{equation}\label{decreasing}
   	0<u_{k}\leq u_{k+1}\leq v, \quad \textrm{on } D_k,
   	\end{equation}
  	from which we conclude a pointwise limit on the triangle $D$
  	\begin{equation}
  		0\le u_{\infty}(x):= \lim_{k\to \infty} u_k(x) \leq v(x), \quad \textrm{for } x\in D,
  	\end{equation}
  	and define $u_{\infty}(x)$ to be zero on $\partial D$.
	
  	By standard elliptic estimates (see for example \cite[Theorem 6.19]{GT} and the remark after it), the convergence above will hold in $C^{\ell} (K)$ for any compact set $K\subset \subset (\ov{D}\setminus \{p_1, p_2, p_3\})$ and any $\ell \ge 0$.  Hence $u_{\infty} \in C^{\infty}(\ov{D}\setminus \{p_1, p_2, p_3\})$ and $-\Delta u_{\infty} =4ac\sqrt{3}$ on $D$.  Moreover, by \eqref{decreasing} and the continuity of $v$ it is easily verified that $u_{\infty}$ is also continuous at the corners $p_1, p_2, p_3$ and thus on all of $\ov{D}$.   By the maximum principle, $u_{\infty}=v$.  Note also that 
	$$\int_{D_k} u_k\, dxdy \rightarrow 1, \quad \textrm{as } k \rightarrow \infty.$$
  	
Then for sufficiently large $k$ the domain $\Omega_0:= D_k$ will satisfy conditions (i), (ii), (iii), with $$u: =  \frac{u_k}{\int_{D_k} u_k\, dxdy}, \quad \lambda_0 := \frac{4ac\sqrt{3}}{\int_{D_k} u_k\, dxdy}.$$
Here we are using (\ref{Veqn}) and the fact  that $x\mapsto F(|Du_k(x,0)|)$ will converge uniformly to $x\mapsto F(|Dv(x,0)|)$ on $[x_0, x_1]$ as $k \rightarrow \infty$.
This completes the proof of the lemma.
    	\end{proof}

\begin{proof}[Proof of Theorem \ref{mainthm}]  Let $\Omega_0$ and $u$ be given as in Lemma \ref{initialdata}. By Theorem \ref{thmEG}, there exists a unique classical solution of (\ref{dm}) for a short time interval $[0,T]$ with $T>0$.  

The boundaries $\Gamma_t$ of $\Omega_t$ can be written as graphs over $\Gamma_0:= \partial \Omega_0$.  In particular, using $x$ as a coordinate, part of $\Gamma_t$ is given by a graph $y=g(x,t)$ for $x_0 \le x \le x_1$, with $g(x,0)=0$ for $x_0 \le x \le x_1$, with the unit normal to $\Omega_0$ being in the negative $y$ direction.

  We may assume that 
$$g\in C([0,T], C^{2+\alpha}([x_0,x_1])) \cap C^1([0,T], C^{1+\alpha}([x_0,x_1]).$$
Moreover, $(\partial g/\partial t)(x,0)$ represents the \emph{negative} of the velocity in the normal direction at time $t=0$.  Hence
by (iii) of Lemma \ref{initialdata}, 
$$\frac{1}{2} \left(  \frac{\partial g}{\partial t} (x_0, 0) + \frac{\partial g}{\partial t}(x_1,0) \right) <  \frac{\partial g}{\partial t}\left(\frac{x_0+x_1}{2},0 \right).$$
Then for $t \in (0,\delta]$ for $\delta>0$ sufficiently small, we have
$$\frac{1}{2} \left(  g (x_0, t) + g(x_1,t) \right) <  g\left( \frac{x_0+x_1}{2},t \right).$$
In particular, $x \mapsto g(x,t)$ is not convex for $(x,t) \in [x_0, x_1] \times (0,\delta]$.  Hence $\Omega_t$ is not a convex domain for $t \in (0,\delta]$.
\end{proof}

\end{document}